\def\NZQ{\Bbb}               
\def\NN{{\NZQ N}}
\def\B'c{{\mathcal{B'}}}
\def\U'c{{\mathcal{U'}}}
\def\opn#1#2{\def#1{\operatorname{#2}}} 
\opn\chara{char}
\opn\length{\ell}
\opn\projdim{proj\,dim}
\opn\injdim{inj\,dim}
\opn\ini{in}
\opn\rank{rank}
\opn\depth{depth}
\opn\sdepth{sdepth}
\opn\height{ht}
\opn\embdim{emb\,dim}
\opn\codim{codim}
\opn\Tr{Tr}
\opn\bigrank{big\,rank}
\opn\superheight{superheight}\opn\lcm{lcm}
\opn\trdeg{tr\,deg}%
\opn\reg{reg}
\opn\lreg{lreg}
\opn\set{set}
\opn\supp{Supp}
\opn\shad{Shad}
\opn\div{div}
\opn\Div{Div}
\opn\cl{cl}
\opn\Cl{Cl}
\opn\Spec{Spec}
\opn\Supp{Supp}
\opn\supp{supp}
\opn\Sing{Sing}
\opn\Ass{Ass}
\opn\Min{Min}
\opn\size{size}
\opn\bigsize{bigsize}
\opn\lex{lex}
\opn\Ann{Ann}
\opn\Rad{Rad}
\opn\Soc{Soc}
\opn\Ker{Ker}
\opn\Coker{Coker}
\opn\Im{Im}
\opn\Hom{Hom}
\opn\Tor{Tor}
\opn\Ext{Ext}
\opn\End{End}
\opn\Aut{Aut}
\opn\id{id}
\opn\nat{nat}
\opn\GL{GL}
\opn\SL{SL}
\opn\mod{mod}
\opn\ord{ord}
\opn\aff{aff}
\opn\con{conv}
\opn\relint{relint}
\opn\st{st}
\opn\lk{lk}
\opn\cn{cn}
\opn\core{core}
\opn\vol{vol}
\opn\gr{gr}
\def\pot#1#2{#1[\kern-0.28ex[#2]\kern-0.28ex]}
\opn\dirlim{\underrightarrow{\lim}}
\opn\invlim{\underleftarrow{\lim}}
\let\Dirsum=\bigoplus
\def\pnt{{\raise0.5mm\hbox{\large\bf.}}}
\def\Implies{\ifmmode\Longrightarrow \else
     \unskip${}\Longrightarrow{}$\ignorespaces\fi}
\def\implies{\ifmmode\Rightarrow \else
     \unskip${}\Rightarrow{}$\ignorespaces\fi}
\def\iff{\ifmmode\Longleftrightarrow \else
     \unskip${}\Longleftrightarrow{}$\ignorespaces\fi}
\newtheorem{Theorem}{Theorem}[section]
\newtheorem{Lemma}[Theorem]{Lemma}
\newtheorem{Corollary}[Theorem]{Corollary}
\newtheorem{Proposition}[Theorem]{Proposition}
\newtheorem{Remark}[Theorem]{Remark}
\newtheorem{Example}[Theorem]{Example}
\let\epsilon=\varepsilon
\let\phi=\varphi
\let\kappa=\varkappa
\numberwithin{equation}{section}
\title{Monomial ideals of minimal depth and trivial modifications}
\author[Muhammad Ishaq]{Muhammad Ishaq}
\email{ishaq$\_\,$maths@yahoo.com}
\begin{document}
\maketitle
\begin{abstract} Let $S$ be a polynomial algebra over a field. We study classes of monomial ideals (as for example lexsegment ideals) of $S$ having minimal depth. In particular, Stanley's conjecture holds for these ideals. Also we show that if Stanley's conjecture holds for a square free monomial ideal then it holds for all its trivial modifications.\\\\
\textbf{Key Words:} Monomial ideal, Stanley decomposition, Stanley depth, Lexsegment ideal, Minimal depth. \\
\textbf{2000 Mathematics Subject Classification:} Primary 13C15, Secondary 13P10, 13F20, 05E45, 05C65.

\end{abstract}
\section*{Introduction}
Let $K$ be a field and $S=K[x_1,\ldots,x_n]$ be a polynomial
ring in $n$ variables over $K$. Let $I\subset S$ be a monomial ideal and $I=\cap_{i=1}^sQ_i$ an irredundant primary decomposition
of $I$, where the $Q_i$ are monomial ideals. Let $Q_i$ be $P_i$-primary. Then each $P_i$ is a monomial prime ideal and $\Ass(S/I)=\{P_1,\dots,P_s\}$.\\
\indent According to Lyubeznik \cite{LB} the size of $I$, denoted $\size(I)$, is the
number $a+(n-b)-1$, where $a$ is the minimum number $t$ such that there exist $j_1<\dots <j_t$ with $$\sqrt{\sum_{l=1}^tQ_{j_l}}=\sqrt{\sum_{j=1}^sQ_j},$$ and where $b=\height(\sum_{j=1}^sQ_j).$ It is clear from the definition that $\size(I)$ depends only on the associated prime ideals of $S/I$. In the above definition if we replaced ``there exists $j_1<\dots <j_t$" by ``for all $j_1<\dots <j_t$", we obtain the definition of $\bigsize(I)$, introduced by Popescu \cite{D3}. Clearly $\bigsize(I)\geq \size(I)$.
\begin{Theorem}(Lyubeznik \cite{LB})\label{dsize} Let $I\subset S$ ba a monomial ideal then $\depth(I)\geq 1+\size(I).$
\end{Theorem}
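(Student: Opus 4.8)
The plan is to prove the equivalent statement $\depth(S/I)\ge\size(I)$ (note $\depth(I)=\depth(S/I)+1$ for $0\ne I\subsetneq S$), by induction, after a reduction that puts us in the case $\sqrt{\sum_iQ_i}=\mm$. Write $P_i=(V_i)$ for a set of variables $V_i$, put $V=\bigcup_iV_i$, so $b=\height(\sum_iQ_i)=|V|$. Since a $P_i$-primary monomial ideal is generated by monomials in the variables of $P_i$, the ideal $I$ is extended from $T=K[x_j:x_j\in V]$, say $I=I_0T$; then $S/I\iso(T/I_0)\otimes_KK[x_j:x_j\notin V]$, hence $\depth(S/I)=\depth_T(T/I_0)+(n-b)$, and as $\size_T(I_0)=\size(I)-(n-b)$ it suffices to treat the case $b=n$, i.e.\ $\sqrt{\sum_iQ_i}=\mm=(x_1,\dots,x_n)$. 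There $\size(I)=a-1$, and $a$ is precisely the least cardinality of a set $A\subseteq\{1,\dots,n\}$-indices with $\bigcup_{i\in A}V_i=\{1,\dots,n\}=:[n]$; the goal becomes $\depth(S/I)\ge a-1$, to be shown by induction on $n$ and, for fixed $n$, on the number $s$ of components.

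If $a\le1$ there is nothing to prove, so take $a\ge2$. The combinatorial lemma I would establish first is that some $\height P_i\le n-a+1$: in a minimal cover $A$ with $|A|=a$, minimality provides each $i\in A$ with a \emph{private} variable $e_i\in V_i$ that lies in no $V_j$ with $j\in A\setminus\{i\}$; these $e_i$ are distinct, and a fixed $V_j$ ($j\in A$) omits every $e_i$ with $i\in A\setminus\{j\}$, hence at least $a-1$ variables. Renumbering so that $\height P_s$ is least, $S/Q_s$ is Cohen--Macaulay of dimension $n-\height P_s\ge a-1$, so $\depth(S/Q_s)\ge a-1$. With $I'=\bigcap_{i<s}Q_i$ I would then use the Mayer--Vietoris sequence
\[
0\To S/I\To S/Q_s\dirsum S/I'\To S/(I'+Q_s)\To0,
\]
which gives $\depth(S/I)\ge\min\{\depth(S/Q_s),\,\depth(S/I'),\,\depth(S/(I'+Q_s))+1\}$. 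For $S/I'$: $\Ass(S/I')=\{P_i:i<s\}$ and $\mm':=\sum_{i<s}P_i$ satisfies $\mm'+P_s=\mm$; if $\mm'=\mm$ then $\size(I')$ is the cover number of $\{V_i\}_{i<s}$ minus one, which is $\ge a-1$ since discarding a set cannot lower a cover number, and if $\mm'\subsetneq\mm$ the reduction applied to $I'$ leaves inside $K[x_j:x_j\in\mm']$ an ideal of size $\ge a-2$ (a minimal cover of $\mm'$ drawn from $\{V_i\}_{i<s}$, together with $V_s$, covers $[n]$), which the surplus factor $n-\height\mm'\ge1$ makes up; either way the inductive hypothesis yields $\depth(S/I')\ge a-1$.

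The main obstacle is the last term: one must show $\depth(S/(I'+Q_s))\ge a-2$. The structural fact to exploit is $\sqrt{I'+Q_s}=\bigcap_{i<s}(P_i+P_s)$, because $V((\bigcap_{i<s}P_i)+P_s)=\bigcup_{i<s}V(P_i+P_s)$; thus every minimal prime of $I'+Q_s$ lies in $\{P_i+P_s:i<s\}$, and a subcollection of them summing to the relevant radical, once the common block $V_s$ is stripped off, covers the remaining variables, and with $V_s$ adjoined covers $[n]$ — forcing $\size(I'+Q_s)\ge a-2$. I would then close the argument by applying the inductive hypothesis to $I'+Q_s$. The genuinely delicate point, which I expect to be the crux, is that passing from $I$ to $I'+Q_s$ can create embedded primes, so $I'+Q_s$ need not have fewer primary components than $I$, and the induction must be carried by an invariant that really drops. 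The natural device is that $S/(I'+Q_s)$ is a module over the smaller polynomial ring $R=K[x_j:x_j\notin V_s]$ — because $S/Q_s$ is module-finite and free over $R$ — so that its depth can be computed over $R$, where the number of variables has dropped by $\height P_s\ge1$; care is needed to phrase the inductive statement so that it applies there (e.g.\ replacing $I'+Q_s$ by $\bigcap_{i<s}(Q_i+Q_s)$, which has the same radical, at most $s-1$ primary components, and size $\ge a-2$, and transferring the estimate). Feeding the three bounds into the Mayer--Vietoris sequence then gives $\depth(S/I)\ge a-1=\size(I)$, that is, $\depth(I)\ge1+\size(I)$.
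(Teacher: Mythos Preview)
The paper does not give a proof of this statement: it is quoted as a theorem of Lyubeznik \cite{LB} in the introduction and is used as a black box thereafter. So there is no ``paper's own proof'' to compare your proposal against.

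That said, your outline follows the standard Mayer--Vietoris/induction route one finds in the literature, and the reduction to $\sum_iP_i=\mm$ together with the combinatorial bound $\height P_s\le n-a+1$ are both correct. You have also correctly located the crux: controlling $\depth(S/(I'+Q_s))$. The difficulty you flag is genuine, and the fix you sketch is not yet a proof. The ideal that appears in the Mayer--Vietoris sequence is $I'+Q_s=(\bigcap_{i<s}Q_i)+Q_s$, whereas the ideal you propose to estimate is $\bigcap_{i<s}(Q_i+Q_s)$; in general only the inclusion $I'+Q_s\subseteq\bigcap_{i<s}(Q_i+Q_s)$ holds, these two ideals can differ, and a lower bound on the depth of the quotient by the latter does not automatically give one for the former. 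Saying that $S/Q_s$ is free over $R=K[x_j:x_j\notin V_s]$ is true and useful, but it does not by itself convert $S/(I'+Q_s)$ into $R$ modulo a monomial ideal of $R$ to which the inductive hypothesis (stated for monomial ideals in polynomial rings) applies; $I'+Q_s$ is still an ideal of $S$, and its generators in the $V_s$-variables need not be pure powers. To close the argument you must either (i) show directly that $\depth(S/(I'+Q_s))\ge a-2$ without swapping ideals, or (ii) justify precisely why the swap to $\bigcap_{i<s}(Q_i+Q_s)$ is harmless for the depth bound you need. As written, ``transferring the estimate'' is an assertion, not an argument.
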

\noindent Herzog, Popescu and Vladoiu  say in \cite{HPV} that a monomial ideal $I$  has \emph{minimal depth}, if $\depth(I)=\size(I)+1$. Suppose above that $P_i\not\subset \sum_{1=j\neq i}^sP_j$ for all $i\in [s]$. Then $I$ has minimal depth as shows our Corollary \ref{p} which extends \cite[Theorem 2.3]{D3}. It is easy to see that if $I$ has bigsize $1$ then it must have minimal depth (see our Corollary \ref{bigsize}).

Next we  consider the lexicographical order on the monomials of $S$ induced by $x_1>x_2>\dots>x_n$. Let $d\geq2$ be an integer and $\mathcal{M}_d$ the set of monomials of degree $d$ of $S$. For two monomials $u,v\in \mathcal{M}_d$, with $u\geq_{lex}v$, the set
$$\mathcal{L}(u,v) =\{w\in \mathcal{M}_d| u\geq_{lex}w\geq_{lex}v\}$$
is called a lexsegment set. A lexsegment ideal in $S$ is a monomial ideal of
$S$ which is generated by a lexsegment set. We show that a lexsegment ideal has minimal depth (see our Theorem \ref{thelex}).

Now, let $M$ be a finitely generated multigraded $S$-module. Let $M$ be an $S$-module, $z\in M$ be a homogeneous element in $M$ and $zK[Z]$, $Z\subseteq \{x_1,\ldots,x_n\}$ the linear $K$-subspace of $M$ of all elements $zf$, $f\in K[Z]$. Such a linear $K$-subspace $zK[Z]$ is called a Stanley space of dimension $|Z|$ if it is a free $K[Z]$-module, where $|Z|$ denotes the number of indeterminates in $Z$. A presentation of $M$ as a finite direct sum of spaces $\mathcal{D}:\,\,M=\Dirsum_{i=1}^r z_iK[Z_i]$ is called a Stanley decomposition. Stanley depth of a decomposition $\mathcal{D}$ is the number
$$\sdepth \mathcal{D}=\min\{|Z_i|:i=1,\ldots,r\}.$$ The number
\[
\sdepth(M):=\max\{\sdepth({\mathcal D}):\text{Stanley decomposition of}\;M\}
\]
is called Stanley depth of $M$. In \cite{RP} R. P. Stanley conjectured that $$\sdepth(M)\geq \depth(M).$$

\begin{Theorem}[\cite{HPV}]\label{size} Let $I\subset S$ be a monomial ideal then $\sdepth(I)\geq 1+\size(I).$ In particular, Stanley's conjecture holds for the monomial ideals of minimal depth.
\end{Theorem}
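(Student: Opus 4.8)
The ``in particular'' clause is immediate from the inequality $\sdepth(I)\ge 1+\size(I)$: if $I$ has minimal depth then $\depth(I)=1+\size(I)\le\sdepth(I)$, i.e.\ Stanley's conjecture holds for $I$. So the substance is the inequality $\sdepth(I)\ge 1+\size(I)$, which is the Stanley-depth analogue of Lyubeznik's Theorem~\ref{dsize}; I would prove it by induction on $n$, mimicking the inductive scheme behind Theorem~\ref{dsize} but replacing the recursion valid for $\depth$ by one valid for $\sdepth$. Write $I=\Sect_{i=1}^sQ_i$ with $Q_i$ being $P_i$-primary; the cases $n\le 1$ and $I\in\{0,S\}$ being trivial, assume $n\ge 2$ and $0\ne I\subsetneq S$.

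The engine of the induction is the following recursion. Choose a variable $x_n$, put $S'=K[x_1,\dots,x_{n-1}]$, and for $j\ge 0$ set $I_{(j)}=(I:x_n^{\,j})\sect S'$, a monomial ideal of $S'$. One has $I_{(0)}\subseteq I_{(1)}\subseteq\cdots$, and the chain stabilizes, say at $I_{(r)}=I_{(r+1)}=\cdots=(I:x_n^{\infty})\sect S'$. Sorting the monomials of $I$ by their $x_n$-exponent gives the internal direct sum of $K$-vector spaces $I=\big(\Dirsum_{j=0}^{r-1}x_n^{\,j}I_{(j)}\big)\oplus x_n^{\,r}I_{(r)}K[x_n]$, in which each layer $x_n^{\,j}I_{(j)}$ with $j<r$ is a $K[x_1,\dots,x_{n-1}]$-module isomorphic to $I_{(j)}$, while the last summand is an $S$-module isomorphic to $I_{(r)}\tensor_KK[x_n]$. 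First I would observe that concatenating a Stanley decomposition over $S$ of $I_{(r)}\tensor_KK[x_n]$ with Stanley decompositions over $S'$ of the layers produces a bona fide Stanley decomposition of $I$ over $S$: every Stanley space $uK[Z]$ arising this way satisfies $u\in x_n^{\,j}I_{(j)}\subseteq I$, hence $uK[Z]\subseteq I$ because $I$ is an ideal, and these spaces partition $I$ as a $K$-vector space since the layers do. Combined with the estimate $\sdepth_S(N\tensor_KK[x_n])\ge\sdepth_{S'}(N)+1$ for the stable tail, this yields
\[\sdepth_S(I)\ \ge\ \min\Big\{\ \min_{0\le j\le r-1}\sdepth_{S'}(I_{(j)})\ ,\ \ \sdepth_{S'}(I_{(r)})+1\ \Big\}.\]

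Now I would close the induction exactly as Lyubeznik closes his. The same recursion holds with $\depth$ in place of $\sdepth$, and the combinatorial core of Theorem~\ref{dsize} is precisely the statement that the variable $x_n$ can be chosen so that $\size_{S'}(I_{(j)})\ge\size(I)$ for every $j<r$ and $\size_{S'}(I_{(r)})\ge\size(I)-1$. These are assertions about the pair of numbers $(a,b)$ of the Introduction, hence about associated primes only: $\Ass(S'/I_{(j)})$ is obtained from $\{P_1,\dots,P_s\}$ by striking the variable $x_n$ from each prime (and, for $j$ large enough, discarding the primes that contained $x_n$), and one analyzes how a minimal prime-cover and the height $b=\height(\sum_jQ_j)$ behave under this operation --- the same combinatorial lemma that drives the proof of Theorem~\ref{dsize}. (When $x_n$ happens to be a nonzerodivisor on $S/I$ this degenerates to $r=0$ and recovers the formula $\sdepth_S(I)=\sdepth_{S'}(I\sect S')+1$, with $\size(I)=\size_{S'}(I\sect S')+1$.) Granting the choice of $x_n$, the induction hypothesis over $S'$ gives $\sdepth_{S'}(I_{(j)})\ge\size(I)+1$ for $j<r$ and $\sdepth_{S'}(I_{(r)})+1\ge\size(I)+1$, and the displayed recursion then delivers $\sdepth_S(I)\ge 1+\size(I)$.

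The work is concentrated in two places. The first is the recursion itself, whose proof I sketched above: the subtlety is that $I=\bigoplus x_n^{\,j}I_{(j)}\oplus x_n^{\,r}I_{(r)}K[x_n]$ is a decomposition of $K$-vector spaces but \emph{not} of $S$-modules, so one must check by hand that splicing Stanley decompositions of the layers yields genuine Stanley spaces lying inside $I$ and that the stable tail honestly contributes the extra $+1$. The second, which I expect to be the main obstacle, is the combinatorial selection of $x_n$ and the attendant $\size$-inequalities: as for Theorem~\ref{dsize}, this requires a delicate case analysis of how the covering number $a$ and the height $b$ change when a variable is struck from the associated primes, with the degenerate layers (some $I_{(j)}=0$, or contractions to $S'$ that collapse a primary component) needing separate but easy treatment since $\sdepth(0)=\infty$.
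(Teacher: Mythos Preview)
The paper does not contain a proof of this theorem: it is quoted verbatim from \cite{HPV} and used as a black box, so there is nothing in the present paper to compare your argument against.

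That said, your outline is essentially the strategy of \cite{HPV}. The $x_n$-layer filtration $I=\bigoplus_{j<r}x_n^{\,j}I_{(j)}\oplus x_n^{\,r}I_{(r)}K[x_n]$ and the ensuing lower bound for $\sdepth_S(I)$ in terms of $\sdepth_{S'}(I_{(j)})$ are exactly what Herzog--Popescu--Vl\u adoiu use, and your verification that splicing Stanley decompositions of the layers gives a Stanley decomposition of $I$ is correct. Where your sketch is thin is precisely where the real work lies: the assertion that one can choose $x_n$ so that $\size_{S'}(I_{(j)})\ge\size_S(I)$ for $j<r$ and $\size_{S'}(I_{(r)})\ge\size_S(I)-1$ is not a restatement of Lyubeznik's lemma but a separate (if parallel) combinatorial fact that has to be proved, and your description of $\Ass(S'/I_{(j)})$ as ``striking $x_n$ from each prime'' is imprecise --- contracting to $S'$ can make distinct $P_i$ collapse and can make the intersection redundant, so both the covering number $a$ and the ambient height $b$ move in ways that require a genuine case analysis. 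In \cite{HPV} this step occupies the bulk of the argument; you have correctly identified it as ``the main obstacle'' but have not actually carried it out.
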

 As a consequence, Stanley's depth holds for all ideals considered above since they have minimal depth. It is still not known a relation between $\sdepth(I)$ and $\sdepth(S/I)$, but our Theorem \ref{theorem1} shows that Stanley's conjecture holds also for $S/I$ if $P_i\not\subset \sum_{1=j\neq i}^sP_j$ for all $i\in [s]$.

Let $J$ be a so called trivial modification of a square free monomial ideal $I$ in the sense of \cite{HTT}, \cite{SN}.
Our Theorem \ref{theorem2} shows that $\sdepth(J)=\sdepth(I)$. It follows that if Stanley's conjecture holds for $I$ then it holds for all trivial modifications of it (see our Corollary \ref{c}).
\section{Minimal depth}
 We start this section  extending some results of Popescu in \cite{D3}. Lemma \ref{Lemma1}, Proposition \ref{pro1}, Lemma \ref{size1} and Corollary \ref{bigsize} were proved by Popescu when $I$ is a squarefree monomial ideal. We show that with some small changes the same proofs work even in the non-squarefree case.
\begin{Lemma}\label{Lemma1}
Let $I=\bigcap\limits_{i=1}^sQ_i$ be the irredundant presentation of $I$ as an intersection of primary monomial ideals. Let $P_i:=\sqrt{Q_i}$. If $P_s\not\subset \sum_{i=1}^{s-1}P_i$, then $$\depth(S/I)=\min\{\depth(S/\cap_{i=1}^{s-1}Q_i),\depth(S/Q_s),\,1+\depth(S/\cap_{i=1}^{s-1}(Q_i+Q_s))\}.$$
\end{Lemma}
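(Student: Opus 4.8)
The plan is to use the short exact sequence relating an intersection of ideals to their sum. Set $I' := \bigcap_{i=1}^{s-1} Q_i$ so that $I = I' \cap Q_s$. There is a standard short exact sequence of $S$-modules
\[
0 \to S/I \to S/I' \oplus S/Q_s \to S/(I' + Q_s) \to 0,
\]
where the first map is the diagonal and the second is the difference of the natural surjections. From the long exact sequence in local cohomology (or the depth lemma applied to this sequence), one gets the inequality
\[
\depth(S/I) \geq \min\{\depth(S/I'),\, \depth(S/Q_s),\, 1 + \depth(S/(I'+Q_s))\},
\]
and also $\depth(S/(I'+Q_s)) \geq \min\{\depth(S/I') - 1,\, \depth(S/Q_s) - 1,\, \depth(S/I)\}$ and the analogous bound for $\depth(S/I')\oplus S/Q_s)$. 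So the content of the lemma is to promote the first inequality to an equality, and this is exactly where the hypothesis $P_s \not\subset \sum_{i=1}^{s-1} P_i$ must be used.

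First I would record that $\depth(S/I' \oplus S/Q_s) = \min\{\depth(S/I'), \depth(S/Q_s)\}$. The strategy for the equality is to show that the connecting-type inequalities in the depth lemma cannot all be strict simultaneously; equivalently, one shows that the map on the relevant (local) cohomology in the degree that computes $\depth(S/I)$ does not vanish, so that the minimum is actually attained. Concretely, I expect one uses the hypothesis to produce a variable, or a prime, that distinguishes $Q_s$ from the others: since $P_s \not\subset \bigcup_{i<s} P_i \subseteq \sqrt{I'}$ (actually $\sqrt{I'} = \bigcap_{i<s} P_i$, and $P_s \not\subset \sum_{i<s}P_i$ is the right condition because $\sum P_i$ is the prime corresponding to the ``extra'' support of $I'+Q_s$), there is a monomial prime, or a minimal prime of $I'+Q_s$, whose behavior forces $\Ext^{j}_S(S/(I'+Q_s), S)$ to map onto (or inject from) the corresponding $\Ext$ of $S/I$ in the critical degree $j = n - \depth(S/I) $ or $j = n - 1 - \depth(S/(I'+Q_s))$. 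This is the mechanism by which the three-term minimum becomes exact rather than merely a lower bound.

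The main obstacle, and the technical heart of the argument, is precisely this non-vanishing / surjectivity claim: showing that under $P_s \not\subset \sum_{i<s} P_i$ the connecting homomorphism behaves as needed so no cancellation occurs in the long exact sequence. I would try to reduce this to a multigraded/combinatorial statement — perhaps localizing at the prime $\sum_{i<s} P_i + P_s$ or restricting to the variables appearing in $P_s$ but not in the others — where the primary components decouple and the depth computation can be done by hand (for instance, after such a localization, $S/(I'+Q_s)$ becomes a tensor product over $K$ of pieces coming from $Q_s$ and from $I'$ separately, so its depth is the sum and one can compare term by term). Throughout I would follow the pattern of \cite{D3}, since Popescu proved the squarefree case; the remark in the text that ``with some small changes the same proofs work'' suggests the only adaptation needed is replacing $P_i$ by $\sqrt{Q_i}$ at the appropriate spots and checking that primariness (rather than primeness) of the $Q_i$ does not interfere with the support/depth bookkeeping.
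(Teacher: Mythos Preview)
Your setup is right: the Mayer--Vietoris sequence
\[
0 \longrightarrow S/I \longrightarrow S/I' \oplus S/Q_s \longrightarrow S/(I'+Q_s) \longrightarrow 0
\]
with $I'=\bigcap_{i<s}Q_i$ is exactly what the paper uses, and the Depth Lemma immediately gives the lower bound
\[
\depth(S/I)\ \geq\ \min\{\depth(S/I'),\ \depth(S/Q_s),\ 1+\depth(S/(I'+Q_s))\}.
\]
Where you go astray is in the plan for the reverse inequality. You propose to chase connecting maps in local cohomology or $\Ext$, localize, and argue that no cancellation occurs. That is far more machinery than needed, and you yourself flag it as ``the main obstacle'' without a concrete mechanism.

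The paper's argument is short and elementary, and it uses the hypothesis in a way you hint at but do not exploit. Since $P_s\not\subset\sum_{i<s}P_i$, pick a variable $x_j\in P_s\setminus\sum_{i<s}P_i$ and let $a$ be minimal with $x_j^a\in Q_s$. Then, because $x_j\notin P_i$ for $i<s$, one checks directly that
\[
I:x_j^{a}\ =\ \bigcap_{i=1}^{s-1}Q_i\ =\ I'.
\]
By the standard fact that colon by an element does not decrease depth (e.g.\ \cite[Corollary~1.3]{R1}), this yields $\depth(S/I)\leq\depth(S/I')$. Separately, $P_s\in\Ass(S/I)$ gives $\depth(S/I)\leq\depth(S/Q_s)$. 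Thus $\depth(S/I)\leq\depth(S/I'\oplus S/Q_s)$, and combining this upper bound with the Depth Lemma lower bound forces the claimed equality with no cohomological analysis at all. The colon--ideal trick is the entire content of the hypothesis; once you have it, the proof is three lines.
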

\begin{proof}
We have the following exact sequence $$0\longrightarrow S/I\longrightarrow S/\cap_{i=1}^{s-1}Q_i\oplus S/Q_s\longrightarrow S/\cap_{i=1}^{s-1}(Q_i+Q_s)\longrightarrow 0.$$ Clearly $\depth(S/I)\leq \depth(S/Q_s)$ by \cite[Proposition 1.2.13]{BH}. Choosing $x_j^a$ where $x_j\in P_s\not\subset \sum_{i=1}^{s-1}P_i$ and $a$ is minimum such that $x_j^a\in Q_s$ we see that $I:x_j^a=\cap_{i=1}^{s-1}Q_i$ and by \cite[Corollary 1.3]{R1} we have $$\depth(S/I)\leq \depth S/(I:x_j^a)=\depth S/(\cap_{i=1}^{s-1}Q_i).$$ Now by using Depth Lemma (see \cite[Lemma 1.3.9]{RV}) we have $$\depth(S/I)=\min\{\depth (S/\cap_{i=1}^{s-1}Q_i),\depth(S/Q_s),\,1+\depth (S/\cap_{i=1}^{s-1}(Q_i+Q_s))\},$$ which is enough.
\end{proof}
\begin{Proposition}\label{pro1}
Let $I=\bigcap\limits_{i=1}^sQ_i$ be the irredundant presentation of $I$ as an intersection of primary monomial ideals. Let $P_i:=\sqrt{Q_i}$. If $P_i\not\subset \sum_{1=i\neq j}^{s-1}P_j$ for all $i\in [s]$. Then $\depth(S/I)=s-1$.
\end{Proposition}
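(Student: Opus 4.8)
The plan is to prove Proposition~\ref{pro1} by induction on $s$, using Lemma~\ref{Lemma1} as the induction step. For $s=1$ the ideal $I=Q_1$ is $P_1$-primary with $P_1\neq (0)$ (the hypothesis for $s=1$ being vacuous, but $I$ a proper ideal), and a primary monomial ideal has $\depth(S/I)=0$ since some power of each variable in $P_1$ lies in $Q_1$, so $\mathfrak m\in\Ass(S/I)$ when... actually more carefully: $S/Q_1$ has an embedded-free socle element, giving $\depth(S/Q_1)=0=s-1$. Alternatively one checks $\size(Q_1)$-type arguments, but the direct socle computation is cleanest.

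For the inductive step, assume the result for fewer than $s$ components. Since $P_s\not\subset\sum_{i=1}^{s-1}P_i$, Lemma~\ref{Lemma1} applies and gives
$$\depth(S/I)=\min\{\depth(S/\textstyle\bigcap_{i=1}^{s-1}Q_i),\ \depth(S/Q_s),\ 1+\depth(S/\textstyle\bigcap_{i=1}^{s-1}(Q_i+Q_s))\}.$$
I would now evaluate each of the three terms. The first ideal $\bigcap_{i=1}^{s-1}Q_i$ has $s-1$ primary components whose radicals $P_1,\dots,P_{s-1}$ still satisfy the hypothesis $P_i\not\subset\sum_{j\neq i,\ j\le s-1}P_j$ (removing a component only shrinks the sums on the right, so non-containment is preserved), hence by induction $\depth(S/\bigcap_{i=1}^{s-1}Q_i)=s-2$. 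The second term $\depth(S/Q_s)=0$ since $Q_s$ is primary. So the minimum is already $\le 0$; but depth is nonnegative, so we must also show $\depth(S/I)\ge s-1$ — wait, that contradicts the bound $\depth(S/I)\le\depth(S/Q_s)=0$ unless $s=1$. Let me re-read: the claimed conclusion is $\depth(S/I)=s-1$, yet $\depth(S/I)\le\depth(S/Q_s)=0$ forces $s-1\le 0$. The resolution must be that the hypothesis is $P_i\not\subset\sum_{i\ne j}P_j$ over \emph{all} $j\in[s]$, i.e. the ``$s-1$'' in the displayed hypothesis is a typo for ``$s$''; but even so $P_s$ being a nonzero prime makes $\depth(S/Q_s)=0$. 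The only way out is that ``$\depth$'' here should be read as bounding $\size$, or that the statement concerns $\depth(I)$ rather than $\depth(S/I)$. I will therefore interpret the intended claim as $\depth(S/I)=\min_i\depth(S/Q_i)$ combined with a careful lower bound, and reconcile with the $s-1$ only when all $Q_i=P_i$ and one computes directly that $S/\sum(P_i+P_s)$ relations telescope; in the squarefree reduced case $\depth(S/P_i)=n-\height P_i$ and the mutual-nontriviality forces $\height(P_i+P_j)$ to grow, yielding $s-1$ after the three-term minimum collapses to the third term $1+(s-2)$.

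The key computation is thus the third term: $\depth(S/\bigcap_{i=1}^{s-1}(Q_i+Q_s))$. The ideals $Q_i+Q_s$ have radicals $P_i+P_s$, and I claim these $s-1$ ideals again satisfy the Proposition's hypothesis, namely $P_i+P_s\not\subset\sum_{j\ne i,\,j\le s-1}(P_j+P_s)=\bigl(\sum_{j\ne i,\,j\le s-1}P_j\bigr)+P_s$. This follows because $P_i\not\subset\sum_{j\ne i}P_j\supseteq\bigl(\sum_{j\ne i,\,j\le s-1}P_j\bigr)+P_s$, using the full hypothesis over all $j\in[s]$. Hence by induction $\depth(S/\bigcap_{i=1}^{s-1}(Q_i+Q_s))=s-2$, so the third term equals $s-1$. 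The main obstacle — and the place requiring genuine care — is showing the minimum in Lemma~\ref{Lemma1} is \emph{attained} by this third term, i.e.\ that the first two terms are $\ge s-1$; since as noted $\depth(S/Q_s)=0<s-1$ for $s\ge 2$, the statement as literally printed cannot hold, so I expect the actual proof (and my reconstruction) must restrict to the case $Q_i$ prime \emph{and} intersect with a genuine lower-bound argument via Theorem~\ref{dsize} showing $\size(I)=s-2$ hence $\depth(I)=\depth(S/I)+1\ge s-1$; the hard part is verifying $\size(\bigcap P_i)=s-2$ under the pairwise-nontriviality hypothesis, which amounts to showing no proper sub-collection of the $P_i$ has the same radical-sum, a direct consequence of $P_i\not\subset\sum_{j\ne i}P_j$. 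Combining $\depth(S/I)\le s-1$ from the exact sequence argument with $\depth(S/I)\ge 1+\size(I)-1=s-2$... the precise bookkeeping of this last inequality is where I would spend the most effort, matching the upper and lower bounds exactly.
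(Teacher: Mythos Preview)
Your proposal founders on a single computational error that cascades through the whole argument: you assert that $\depth(S/Q)=0$ for any monomial primary ideal $Q$. This is false. A monomial primary ideal $Q$ with $\sqrt{Q}=P$ is generated by monomials in the variables of $P$ only, so $S/Q$ is a polynomial ring in the remaining $n-\height(P)$ variables over an Artinian ring, and $\depth(S/Q)=n-\height(P)$. Your ``socle element'' lives only in the small ring, not in $S$. This is why you end up believing the statement ``cannot hold as literally printed'' and start casting about for reinterpretations; the statement is fine once one makes the reduction the paper makes at the outset, namely to the case $\sum_{j=1}^s P_j=\mathfrak m$ (the general formula carries the correction term $\dim(S/\sum P_j)$). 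Under that reduction the base case $s=1$ really does give $P_1=\mathfrak m$ and $\depth(S/Q_1)=0$, and for $s>1$ one has $\depth(S/Q_s)=n-\height(P_s)\ge 1$.

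You also miscompute the first term in the minimum. After dropping $Q_s$, the remaining primes still satisfy the hypothesis, but now $\sum_{i<s}P_i\neq\mathfrak m$ (precisely because $P_s\not\subset\sum_{i<s}P_i$), so the induction hypothesis with the correction term yields $\depth\bigl(S/\bigcap_{i<s}Q_i\bigr)=(s-2)+\dim\bigl(S/\sum_{i<s}P_i\bigr)\ge s-1$, not $s-2$. Your treatment of the third term is correct: the ideals $Q_i+Q_s$ are again primary, their radicals $P_i+P_s$ satisfy the hypothesis and sum to $\mathfrak m$, so induction gives depth $s-2$ and the third term equals $s-1$; this is indeed the value realizing the minimum. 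The paper handles the second term by observing that $P_i\not\subset P_s$ forces $\height(P_i+P_s)>\height(P_s)$, hence $\depth(S/Q_s)>\depth(S/(Q_i+Q_s))$ for each $i<s$, whence $\depth(S/Q_s)\ge 1+\depth\bigl(S/\bigcap_{i<s}(Q_i+Q_s)\bigr)=s-1$.
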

\begin{proof}
It is enough to consider the case when $\sum_{j=1}^sP_j=\mathfrak{m}$. We use induction on $s$. If $s=1$ the result is trivial. Suppose that $s>1$. By Lemma \ref{Lemma1} we get $$\depth(S/I)=\min\{\depth(S/\cap_{i=1}^{s-1}Q_i),\depth(S/Q_s),\,1+\depth(S/\cap_{i=1}^{s-1}(Q_i+Q_s))\}.$$ Then by induction hypothesis we have \[\depth(S/\cap_{i=1}^{s-1}Q_i)=s-2+\dim (S/(\sum_{i=1}^{s-1}Q_i))\geq s-1.\] We see that $\cap_{i=1}^{s-1}(Q_i+Q_s)$ satisfies also our assumption, the induction hypothesis gives $\depth (S/\cap_{i=1}^{s-1}(Q_i+Q_s))=s-2$. Since $Q_i\not\subset Q_s$, $i<s$ by our assumption we get $\depth(S/Q_s)>\depth(S/(Q_i+Q_s))$ for all $i<s$. It follows $\depth(S/Q_s)\geq 1+\depth(S/\cap_{i=1}^{s-1}(Q_i+Q_s))$ which is enough.
\end{proof}
\begin{Corollary}\label{p}
Let $I\subset S$ be a monomial ideal such that $\Ass(S/I)=\{P_1\dots,P_s\}$ where $P_i\not\subset \sum_{1=j\neq i}^sP_j$ for all $i\in [s]$. Then $I$ has minimal depth.
\end{Corollary}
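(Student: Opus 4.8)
The plan is to deduce Corollary \ref{p} from Proposition \ref{pro1} together with Lyubeznik's bound in Theorem \ref{dsize}. First I would reduce to the case $\sum_{j=1}^s P_j = \mathfrak m$, exactly as in the proof of Proposition \ref{pro1}: if this sum is a proper prime $P$, then after a change of variables it is generated by a subset of the indeterminates, say $x_1,\dots,x_m$ with $m<n$, and both $\size(I)$ and $\depth(S/I)$ behave predictably when passing to $K[x_1,\dots,x_m]$ and tensoring back with the remaining polynomial variables. So assume $b = \height(\sum_{j=1}^s P_j) = n$.

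Next I would compute $\size(I)$ under the hypothesis that the $P_i$ are pairwise ``independent'' in the sense $P_i \not\subset \sum_{j\neq i} P_j$. The key observation is that this hypothesis forces $a = s$: no proper subcollection $P_{j_1},\dots,P_{j_t}$ with $t<s$ can satisfy $\sqrt{\sum_{l=1}^t P_{j_l}} = \sqrt{\sum_{j=1}^s P_j}$, because if some index $i$ is omitted then $\sum_{l} P_{j_l} \subseteq \sum_{j\neq i} P_j$, which does not contain $P_i$ and hence cannot have the same radical as the full sum (the full sum does contain $P_i$). Therefore $a = s$, and $\size(I) = a + (n-b) - 1 = s + 0 - 1 = s-1$.

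Now I would combine the two computations. By Proposition \ref{pro1}, $\depth(S/I) = s-1$, hence $\depth(I) = \depth(S/I) + 1 = s$ via the short exact sequence $0 \to I \to S \to S/I \to 0$ and the Depth Lemma (noting $\depth S = n > s-1$ in the reduced situation, so the depth of $I$ is one more than that of $S/I$). On the other hand Theorem \ref{dsize} gives $\depth(I) \geq 1 + \size(I) = 1 + (s-1) = s$. Thus $\depth(I) = s = 1 + \size(I)$, which is precisely the condition that $I$ has minimal depth.

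The only real subtlety is the reduction step when $\sum_j P_j \neq \mathfrak m$: one must check that both $\depth(S/I)$ (equivalently $\depth(I)$) and $\size(I)$ are unaffected, up to the obvious shift, by adjoining the ``free'' variables not appearing in any $P_i$. For depth this is standard (tensoring with a polynomial ring over $K$ in the free variables adds their number to the depth, while Proposition \ref{pro1} already phrases its conclusion in terms of $\dim(S/\sum Q_i)$), and for size it is immediate since $\size(I)$ depends only on the associated primes and the formula $a + (n-b) - 1$ is insensitive to enlarging $n$ and $b$ simultaneously. Beyond that, everything is bookkeeping; I do not expect any genuine obstacle.
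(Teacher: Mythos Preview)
Your argument is correct and follows the same route as the paper: compute $\size(I)=s-1$ directly from the hypothesis, invoke Proposition~\ref{pro1} to get $\depth(S/I)=s-1$, and conclude. You are in fact more careful than the paper, which simply asserts ``clearly $\size(I)=s-1$'' without spelling out the reduction to $\sum_j P_j=\mathfrak m$ or the reason $a=s$; your appeal to Theorem~\ref{dsize} is harmless but unnecessary, since you already have both quantities exactly.
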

\begin{proof}
Clearly $\size(I)=s-1$ and by Proposition \ref{pro1} we have $\depth(I)=s$, thus we have $\depth(I)=\size(I)+1$, i.e. $I$ has minimal depth.
\end{proof}

\begin{Lemma}\label{size1}
Let $I=\cap_{i=1}^sQ_i$ be the irredundant primary decomposition of $I$ and $\sqrt{Q_i}\neq \mathfrak{m}$ for all $i$. Suppose that there exists $1\leq r<s$ such that $\sqrt{Q_i+Q_j}=\mathfrak{m}$ for each $r<j\leq s$ and $1\leq i\leq r$. Then $\depth(I)=2$.
\end{Lemma}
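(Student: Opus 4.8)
The plan is to mimic the short-exact-sequence argument of Lemma~\ref{Lemma1} and Proposition~\ref{pro1}, now splitting the irredundant decomposition into the two blocks cut out by $r$. I would set $I'=\bigcap_{i=1}^{r}Q_i$ and $I''=\bigcap_{j=r+1}^{s}Q_j$, so that $I=I'\cap I''$ and one has the exact sequence
\[
0\longrightarrow S/I\longrightarrow S/I'\oplus S/I''\longrightarrow S/(I'+I'')\longrightarrow 0 .
\]
Since $\depth(I)=\depth(S/I)+1$ (apply the Depth Lemma to $0\to I\to S\to S/I\to 0$: here $I\neq 0$ because $s\ge 2$, so $\depth S/I\le\dim S/I<n=\depth S$), it is enough to prove $\depth(S/I)=1$, and this I would extract from the displayed sequence.

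The only step carrying real content is the claim that $I'+I''$ is $\mathfrak{m}$-primary. As radicals commute with finite intersections, $\sqrt{I'}=\bigcap_{i=1}^{r}P_i$ and $\sqrt{I''}=\bigcap_{j=r+1}^{s}P_j$, and $\sqrt{I'+I''}=\sqrt{\sqrt{I'}+\sqrt{I''}}$. If a prime $\mathfrak{p}$ contains $I'+I''$, then it contains $\bigcap_{i\le r}P_i$ and $\bigcap_{j>r}P_j$; being prime it therefore contains some $P_i$ with $i\le r$ and some $P_j$ with $j>r$, so $\mathfrak{p}\supseteq P_i+P_j$, and since $\sqrt{P_i+P_j}=\sqrt{Q_i+Q_j}=\mathfrak{m}$ by hypothesis, $\mathfrak{p}=\mathfrak{m}$. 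Hence $\sqrt{I'+I''}=\mathfrak{m}$, and as $I'+I''$ is a proper ideal this yields $\depth\,S/(I'+I'')=0$. I expect this to be the main (indeed essentially the only) obstacle; the remaining inequalities are the same bookkeeping as in the earlier proofs.

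To finish, I would note that $Q_1,\dots,Q_r$ constitute a (possibly redundant) primary decomposition of $I'$, so $\Ass(S/I')\subseteq\{P_1,\dots,P_r\}$; since no $P_i$ equals $\mathfrak{m}$ we get $\depth(S/I')\ge 1$, and likewise $\depth(S/I'')\ge 1$, whence $\depth(S/I'\oplus S/I'')\ge 1$. Applying the Depth Lemma (\cite[Lemma 1.3.9]{RV}) to the displayed sequence then gives simultaneously
\[
\depth(S/I)\;\ge\;\min\{\depth(S/I'\oplus S/I''),\,1\}\;=\;1
\]
and $0=\depth\,S/(I'+I'')\ge\min\{\depth(S/I)-1,\,\depth(S/I'\oplus S/I'')\}$, the second of which forces $\depth(S/I)\le 1$. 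Therefore $\depth(S/I)=1$ and $\depth(I)=2$, as required.
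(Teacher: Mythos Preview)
Your proof is correct and follows essentially the same route as the paper's: both apply the Depth Lemma to the Mayer--Vietoris sequence obtained by splitting the primary components into the blocks $\{Q_1,\dots,Q_r\}$ and $\{Q_{r+1},\dots,Q_s\}$. The paper writes the rightmost term as $S/\bigcap_{i\le r}\bigcap_{j>r}(Q_i+Q_j)$ rather than your $S/(I'+I'')$, but either ideal has radical $\mathfrak{m}$ under the hypothesis, so the depth computation is identical and your version is in fact the cleaner exact sequence.
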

\begin{proof}
The proof follows by using Depth Lemma on the following exact sequence.$$0\longrightarrow S/I\longrightarrow S/\cap_{i=1}^rQ_i\oplus S/\cap_{j>r}^sQ_j\longrightarrow S/\cap_{i=1}^r\cap_{j>r}^s(Q_i+Q_j)\longrightarrow 0.$$
\end{proof}
\begin{Corollary}\label{bigsize}
Let $I\subset S$ be a monomial ideal. If bigsize of $I$ is one then $I$ has minimal depth.
\end{Corollary}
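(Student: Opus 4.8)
The plan is to extract from the hypothesis $\bigsize(I)=1$ enough information about the primary decomposition of $I$, settle two easy cases directly, and reduce the remaining one to Lemma~\ref{size1}. Write $I=\bigcap_{i=1}^{s}Q_i$ for the irredundant primary decomposition, put $P_i=\sqrt{Q_i}$ and $\mathfrak m=(x_1,\dots,x_n)$, and recall from $0\to I\to S\to S/I\to 0$ and the Depth Lemma that $\depth(I)=\depth(S/I)+1$. By Theorem~\ref{dsize} we have $\depth(I)\ge 1+\size(I)$, and since $\size(I)\le\bigsize(I)=1$ it is enough to prove the reverse inequality $\depth(I)\le 1+\size(I)$.

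If $\mathfrak m\in\Ass(S/I)$, then $\depth(S/I)=0$, so $\depth(I)=1$; combined with $\depth(I)\ge 1+\size(I)\ge 1$ this already gives $\size(I)=0$ and $\depth(I)=1+\size(I)$. So assume from now on $P_i\neq\mathfrak m$ for every $i$. If $s=1$, then $\bigsize(I)=\size(I)=n-\height(P_1)=\dim(S/Q_1)$, so the hypothesis forces $\dim(S/Q_1)=1$; since $\mathfrak m\notin\Ass(S/Q_1)$ we get $\depth(S/Q_1)\ge 1$, hence $\depth(S/Q_1)=1$ and $\depth(I)=2=1+\size(I)$.

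The substantial case is $s\ge 2$ with all $P_i\neq\mathfrak m$. The key point is that the $P_i$, being pairwise distinct, cannot all equal $\sqrt{\sum_{j=1}^{s}Q_j}$; in the notation defining $\bigsize$ this forces the parameter $a$ to be at least $2$. Then $\bigsize(I)=a+(n-b)-1=1$ with $b=\height\bigl(\sum_{j}Q_j\bigr)$ forces $a=2$ and $b=n$, i.e. $\sqrt{\sum_{j}Q_j}=\mathfrak m$ and $\sqrt{Q_i+Q_j}=\mathfrak m$ for all $i<j$. This is exactly the hypothesis of Lemma~\ref{size1} with $r=1$ (together with $\sqrt{Q_i}\neq\mathfrak m$, which we have assumed), so $\depth(I)=2$. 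On the other hand, the analogous parameter $a'$ entering $\size(I)$ satisfies $a'\le a=2$ and $a'\neq 1$ (no single $P_j$ equals $\mathfrak m$), so $\size(I)=a'+(n-n)-1=1$, and hence $\depth(I)=1+\size(I)$. In every case $I$ has minimal depth. I expect the only genuinely delicate step to be this last reduction: one must make sure that $\bigsize(I)=1$ and $s\ge 2$ together pin down both the radical of $\sum_j Q_j$ and the radicals of all two-term sums $Q_i+Q_j$, and that is precisely where the distinctness of the associated primes — which rules out the value $a=1$ — is used.
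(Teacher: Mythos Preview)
Your proof is correct and follows essentially the same route as the paper: split into the cases $\size(I)=0$ and $\size(I)=1$, and in the latter invoke Lemma~\ref{size1}. Your write-up is more careful than the paper's, explicitly verifying the hypotheses of Lemma~\ref{size1} (in particular deducing $b=n$ and $\sqrt{Q_i+Q_j}=\mathfrak m$ for all $i<j$ from $\bigsize(I)=1$) and handling the degenerate case $s=1$, which the paper leaves implicit.
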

\begin{proof}
We know that $\size(I)\leq \bigsize(I)$. If $\size(I)=0$ the $\depth(I)=1$ and the result follows in this case. Now let us suppose that $\size(I)=1$. By Lemma \ref{size1} we have $\depth(I)=2$. Hence the result follows.
\end{proof}

\indent Let $d\geq2$ be an integer and $\mathcal{M}_d$ the set of monomials of degree $d$ of $S$. For two monomials $u,v\in \mathcal{M}_d$, with $u\geq_{lex}v$, we consider the lexsegment set
$$\mathcal{L}(u,v) =\{w\in \mathcal{M}_d| u\geq_{lex}w\geq_{lex}v\}.$$

\begin{Theorem}\label{thelex}
Let $I=(\mathcal{L}(u,v))\subset S$ be a lexsegment ideal.  Then $\depth(I)=\size(I)+1$, that is $I$ has minimal depth.
\end{Theorem}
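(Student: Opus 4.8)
By Theorem \ref{dsize} we have $\depth(I)\ge\size(I)+1$, and since $0\neq I\neq S$ the exact sequence $0\to I\to S\to S/I\to 0$ together with the Depth Lemma gives $\depth(I)=\depth(S/I)+1$. So it suffices to prove $\depth(S/I)\le\size(I)$. Now $\size(I)$ depends only on $\Ass(S/I)=\{P_1,\dots,P_s\}$; writing $P_i=(x_j:j\in\sigma_i)$ one has $\size(I)=a+(n-|\Union_i\sigma_i|)-1$, where $a$ is the least cardinality of a subfamily of $\{\sigma_i\}$ whose union is $\Union_i\sigma_i$. In particular, if some $P_i$ contains all the $P_j$, then $a=1$ and $\size(I)=n-|\sigma_i|=\dim(S/P_i)\ge\depth(S/I)$, and we are done. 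So the plan is to analyze $\Ass(S/I)$ and show that it has a largest element, or else that $I$ falls under Corollary \ref{p} or Corollary \ref{bigsize}.

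I would argue by induction on $n+d$. If $x_1\nmid u$, then no monomial of $\mathcal L(u,v)$ is divisible by $x_1$, so $I=I_0S$ with $I_0\subset K[x_2,\dots,x_n]$ a lexsegment ideal; flat base change gives $\depth_S(I)=\depth_{S'}(I_0)+1$ and, since the minimal primary decomposition of $I_0$ extends to one of $I$, also $\size_S(I)=\size_{S'}(I_0)+1$, so the claim follows by induction. If $x_1\mid u$ and $x_1\mid v$, then $I=x_1I_1$ with $I_1=(\mathcal L(u/x_1,v/x_1))$ a lexsegment ideal of degree $d-1$; multiplication by $x_1$ is injective, so $\depth_S(I)=\depth_S(I_1)$, and by tracing its effect on the primary decomposition (it replaces $\Ass(S/I_1)$ by $\Ass(S/I_1)\cup\{(x_1)\}$, which one checks leaves $\size$ unchanged) we get $\size(I)=\size(I_1)$, so again induction applies. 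Hence we may assume $x_1\mid u$ and $x_1\nmid v$.

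In this case, sorting $\mathcal L(u,v)$ by the exponent of $x_1$ and writing $u=x_1^{a_1}u_0$ with $x_1\nmid u_0$ yields
$$I=x_1^{a_1}A+\Bigl(\sum_{c=1}^{a_1-1}x_1^{c}(x_2,\dots,x_n)^{d-c}\Bigr)+B,$$
where, inside $S'=K[x_2,\dots,x_n]$, $A=(\mathcal L_{S'}(u_0,x_n^{d-a_1}))$ is a final lexsegment ideal and $B=(\mathcal L_{S'}(x_2^{d},v))$ is an initial, hence stable, lexsegment ideal containing $x_2^{d}$. This description gives tight control on the minimal primes of $I$: for instance when $x_2\mid v$ every monomial of $\mathcal L(u,v)$ involves $x_1$ or $x_2$, so $I\subset(x_1,x_2)$ and the minimal primes of $I$ lie among $(x_1),(x_2),(x_1,x_2)$. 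Using the Eliahou--Kervaire resolution of the stable ideal $B$ together with the short exact sequence of Lemma \ref{Lemma1}, applied to a primary decomposition of $I$ in which one peels off either the $(x_1)$-component or the components coming from $B$, one computes $\depth(S/I)$ and at the same time determines $\Ass(S/I)$ (a chain of primes $(x_1,\dots,x_j)$, possibly together with one or two further incomparable primes determined by $v$). In every resulting configuration $\Ass(S/I)$ has a maximal element, or satisfies the hypothesis of Corollary \ref{p}, or $I$ has bigsize $1$ and Corollary \ref{bigsize} applies; in each case $\depth(S/I)=\size(I)$, which is what we wanted.

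The main obstacle is precisely this last bookkeeping step: checking that the two reductions genuinely preserve $\size$, that the classification of the possible $u$ and $v$ (equivalently, splitting off the initial lexsegments and the completely lexsegment case) never produces a family of associated primes too spread out for Corollary \ref{p} or Corollary \ref{bigsize}, and, in the configurations not covered by the maximal-element argument, carrying out the Depth Lemma computation along the sequence of Lemma \ref{Lemma1} by hand. Controlling the interaction in that sequence between the stable part $B$ and the part $x_1^{a_1}A+\sum_{c}x_1^{c}(x_2,\dots,x_n)^{d-c}$ is where the real work lies.
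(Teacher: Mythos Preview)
Your two reductions (dropping a variable when $x_1\nmid u$, and factoring out $x_1$ when $x_1\mid v$) are exactly the ones the paper uses, and your remark that passing from $I$ to $I/x_1$ adds only the prime $(x_1)$ to $\Ass$ and leaves $\size$ unchanged is the content of the paper's final paragraph. So the outer shell of your argument is fine.

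The gap is in the core case $x_1\mid u$, $x_1\nmid v$: you do not actually prove anything there, and the trichotomy you announce (``$\Ass(S/I)$ has a maximal element, or Corollary~\ref{p} applies, or bigsize is $1$'') is not established and in fact fails in the cases the paper singles out. For instance, when $u=x_1x_l^{a_l}\cdots x_n^{a_n}$ with $l\ge 4$ and $v=x_2^{\,d}$, one has $(x_1,x_2)\in\Ass(S/I)$ and $(x_2,x_l,\dots,x_n)\in\Ass(S/I)$, neither containing the other, and $\size(I)=l-2\ge 2$; so there is no maximal element and bigsize is not $1$. Corollary~\ref{p} is also unavailable: it requires $P_i\not\subset\sum_{j\neq i}P_j$ for \emph{every} associated prime, and lexsegment ideals typically have many comparable primes (e.g.\ chains $(x_1,\dots,x_j)$), so the hypothesis is violated. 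Your proposed machinery (Eliahou--Kervaire for $B$ plus Lemma~\ref{Lemma1}) might in principle recover the needed information, but you have not shown that it does, and the ``interaction'' you flag as the main obstacle is indeed nontrivial.

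The paper avoids all of this by quoting two external computations: \cite[Propositions~3.2 and~3.4]{EOS} give $\depth(I)$ explicitly in each subcase of $x_1\mid u$, $x_1\nmid v$, and \cite[Proposition~2.5]{MI2} gives enough of $\Ass(S/I)$ to read off $\size(I)$ directly. The proof then reduces to a short case check that the two numbers match. If you want a self-contained argument along your lines, you would effectively have to reprove those two propositions; as written, your proposal is a plan rather than a proof.
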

\begin{proof}
For the trivial cases $u=v$ the result is obvious. Suppose that $u=x_1^{a_1}\cdots x_n^{a_n},\\
 v=x_1^{b_1}\cdots x_n^{b_n}\in S$.
 First assume that $b_1=0$. If there exist $r$ such that $a_1=\dots =a_r=0$ and $a_{r+1}\neq 0$, then $I$ is a lexsegment ideal in $S':=K[x_{r+1},\dots,x_{n}]$. We get  $\depth(IS)=\depth(IS')+r$ and by definition of size we have $\size(IS)=\size(IS')+r$. This means that without loss of generality we can assume that $a_1>0$. If $x_nu/x_1\geq_{lex}v$, then by \cite[Proposition 3.2]{EOS} $\depth(I)=1$ which implies that $\mathfrak{m}\in \Ass(S/I)$, thus $\size(I)=0$ and the result follows in this case. Now consider the complementary case $x_nu/x_1<_{lex}v$, then $u$ is of the form $u=x_1x_l^{a_l}\cdots x_n^{a_n}$ where $l\geq 2$. Let $I=\cap_{i=1}^sQ_i$ be an irredundant primary decomposition of $I$, where $Q_i's$ are monomial primary ideals. If $l\geq 4$ and $v= x_2^{d}$ then by \cite[Proposition 3.4]{EOS} we have $\depth(I)=l-1$. After \cite[Proposition 2.5($ii$)]{MI2} we know that $$\sqrt{\sum_{i=1}^sQ_i}=(x_1,x_2,x_l,\dots,x_n)\notin \Ass(S/I),$$ but $(x_1,x_2),(x_2,x_l,\dots,x_n)\in\Ass(S/I)$. Therefore, $\size(I)=l-2$ and we have $\depth(I)=\size(I)+1$, so we are done in this case. Now consider the case $v=x_2^{d-1}x_j$ for some $3\leq j\leq n-2$ and $l\geq j+2$, then again by \cite[Proposition 3.4]{EOS} we have $\depth(I)=l-j+1$ and by \cite[Proposition 2.5($ii$)]{MI2} we have $$\sqrt{\sum_{i=1}^sQ_i}=(x_1,\dots,x_j,x_l,\dots,x_n)\notin \Ass(S/I)$$ and $(x_1,\dots,x_j),(x_2,\dots,x_j,x_l,\dots,x_n)\in \Ass(S/I)$. Therefore, $\size(I)=l-j$ and again we have $\depth(I)=\size(I)+1$. Now for all the remaining cases by \cite[Proposition 3.4]{EOS} we have $\depth(I)=2$, and by \cite[Proposition 2.5($i$)]{MI2} $$\sqrt{\sum_{i=1}^sQ_i}=(x_1,\dots,x_n)\notin \Ass(S/I),$$ but $(x_1,\dots,x_j),(x_2,\dots,x_n)\in \Ass(S/I),$ for some $j\geq 2.$ Therefore $\size(I)=1$. Thus the equality $\depth(I)=\size(I)+1$ follows in all cases when $b_1=0$.   \\

 \indent Now let us consider that $b_1>0$, then $I=x_1^{b_1}I'$ where $I'=(I:x_1^{b_1})$. Clearly $I'$ is a lexsegment ideal generated by the lexsegment set $\mathcal{L}(u',v')$ where $u'=u/x_1^{b_1}$ and $v'=v/x_1^{b_1}$. The ideals $I$, $I'$ are isomorphic, therefore $\depth(I')=\depth(I)$. It is enough to show that $\size(I')=\size(I)$. We have the exact sequence $$0 \rightarrow S/I'\mathop\rightarrow \limits_{}^{x_1^{b_1}} S/I\rightarrow S/(I,x_1^{b_1})=S/(x_1^{b_1})\rightarrow 0,$$ and therefore $$\Ass(S/I')\subset \Ass(S/I)\subset \Ass(S/I')\cup \{(x_1)\}.$$ As $\{(x_1)\}\in \Ass(S/I)$ since it is a minimal prime over $I$, we get $\Ass (S/I)=\Ass (S/I')\cup \{(x_1)\}$. Let $s'$ be the minimum number such that there exist $P_1,\ldots,P_{s}\in \Ass S/I'$ such that $\sum_{i=1}^s P_i=a:=\sum_{P\in \Ass (S/I')} P$. Then $\size(I')=s'+ \dim (S/a)-1$. Let $s$ be the minimum number $t$ such that there exist $t$ prime ideals in $\Ass (S/I)$ whose sum is $(a,x_1)$. By \cite[Lemma 2.1]{MI2} we have that atleast one prime ideal from $\Ass(S/I')$ contains necessarily $x_1$, we have $x_1\in a$. It follows $s\leq s'$ because anyway $\sum_{i=1}^{s'}P_i=a=\sum_{P\in \Ass(S/I)}P$. If we have $P_1',\dots,P_{s-1}'\in \Ass(S/I')$ such that $\sum_{i=1}^{s-1}P_{i}'+(x_1)=a$ then we have also $\sum_{i=1}^{s-1}P_i'+P_1=a$ for some $P_1\in \Ass(S/I')$ which contains $x_1$. Thus $s=s'$ and so $\size(I)=\size(I')$.
 \end{proof}
\section{Stanley depth and trivial modifications}
Using Corollaries \ref{p}, \ref{bigsize} and Theorems \ref{thelex}, \ref{size} we get the following theorem.
\begin{Theorem} Stanley's conjecture holds for $I$, if it satisfies one of the following statements:
\begin{enumerate}
\item{} $P_i\nsubseteq \sum_{1=j\neq i}^sP_j$ for all $i\in [s]$,
\item{}the bigsize of $I$ is one,
\item{} $I$ is a lexsegment ideal.
\end{enumerate}
\end{Theorem}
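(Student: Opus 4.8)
The plan is to reduce all three cases to the single assertion that $I$ has minimal depth, and then invoke Theorem \ref{size}. Recall that Stanley's conjecture for $I$ is the inequality $\sdepth(I)\ge\depth(I)$, while Theorem \ref{size} gives unconditionally $\sdepth(I)\ge 1+\size(I)$. Hence, as soon as $\depth(I)=\size(I)+1$ --- that is, $I$ has minimal depth --- we may combine the two to obtain $\sdepth(I)\ge 1+\size(I)=\depth(I)$, which is precisely what is wanted. So it suffices to check that each of the hypotheses (1)--(3) forces minimal depth.

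For (1), this is exactly Corollary \ref{p}: when $\Ass(S/I)=\{P_1,\dots,P_s\}$ with $P_i\nsubseteq\sum_{1=j\neq i}^s P_j$ for every $i$, one has $\size(I)=s-1$ and, by Proposition \ref{pro1}, $\depth(I)=s$, so $I$ has minimal depth. For (2), apply Corollary \ref{bigsize}: if $\bigsize(I)=1$ then either $\size(I)=0$, whence $\depth(I)=1$, or $\size(I)=1$, whence Lemma \ref{size1} gives $\depth(I)=2$; in both cases $I$ has minimal depth. For (3), this is the content of Theorem \ref{thelex}, namely that every lexsegment ideal has minimal depth.

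With these three facts assembled, the theorem follows by a one-line application of Theorem \ref{size} in each case. There is no genuine obstacle here: all the work has already been carried out in Section 1 and in \cite{HPV}, and the present statement is essentially a summary corollary of those results. The only thing to be careful about is that Theorem \ref{size} bounds $\sdepth(I)$, not $\sdepth(S/I)$, so the conclusion is about the ideal $I$ itself rather than the quotient.
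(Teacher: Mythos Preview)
Your proposal is correct and follows exactly the paper's own approach: the paper simply writes ``Using Corollaries \ref{p}, \ref{bigsize} and Theorems \ref{thelex}, \ref{size} we get the following theorem,'' and you have spelled out precisely this chain of implications. There is nothing to add.
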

\begin{Remark}{\em
Usually, if Stanley's conjecture holds for an ideal $I$ then we may show that it holds for the module $S/I$ too. There exist no general explanation for this fact. If $I$ is a monomial ideal of bigsize one then Stanley's conjecture holds for $S/I$. Indeed, case $\depth(S/I)=0$ is trivial. Suppose $\depth(S/I)\neq 0$, then by Lemma \ref{size1} $\depth(S/I)=1$, therefore by \cite[Theorem 2.1]{MC} $\sdepth(S/I)\geq 1$. If $I$ is a lexsegment ideal then Stanley's conjecture holds for $S/I$ \cite{MI2}. Below we show this fact in the first case of the above theorem.}
\end{Remark}
\begin{Theorem}\label{theorem1}
Let $I=\bigcap\limits_{i=1}^sQ_i$ be the irredundant presentation of $I$ as an intersection of primary monomial ideals. Let $P_i:=\sqrt{Q_i}$. If $P_i\not\subset \sum_{1=i\neq j}^{s-1}P_j$ for all $i\in [s]$
 then $\sdepth(S/I)\geq \depth(S/I)$,
that is the Stanley's conjecture holds for $S/I$.
\end{Theorem}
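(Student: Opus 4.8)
The plan is to mimic the inductive structure of Proposition \ref{pro1}, but now tracking Stanley depth instead of depth, using the known behaviour of $\sdepth$ under the same short exact sequence. As in the proof of Proposition \ref{pro1}, one reduces to the case $\sum_{j=1}^s P_j = \mathfrak{m}$ and argues by induction on $s$. The case $s=1$ is immediate since then $I$ is $P_1$-primary with $P_1=\mathfrak m$, so $\depth(S/I)=0$ and there is nothing to prove. For $s>1$ I would consider the exact sequence
$$0\longrightarrow S/I\longrightarrow S/\cap_{i=1}^{s-1}Q_i\,\oplus\, S/Q_s\longrightarrow S/\cap_{i=1}^{s-1}(Q_i+Q_s)\longrightarrow 0$$
already used in Lemma \ref{Lemma1}, together with the inequality of Rauf (or the analogue used throughout this circle of papers) that for a short exact sequence $0\to A\to B\to C\to 0$ of $\ZZ^n$-graded modules one has $\sdepth(B)\geq \min\{\sdepth(A),\sdepth(C)\}$ and, more usefully here, $\sdepth(A)\geq \min\{\sdepth(B),\sdepth(C)+1\}$.

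The key numerical input is that Proposition \ref{pro1} already computes $\depth(S/I)=s-1$, and in its proof one sees $\depth(S/\cap_{i=1}^{s-1}Q_i)\geq s-1$, $\depth(S/Q_s)\geq s-1$ (in fact the minimum over these and $1+\depth(S/\cap_{i=1}^{s-1}(Q_i+Q_s))=s-1$). So it suffices to prove the matching lower bounds for Stanley depth: $\sdepth(S/\cap_{i=1}^{s-1}Q_i)\geq s-1$ and $\sdepth\bigl(S/\cap_{i=1}^{s-1}(Q_i+Q_s)\bigr)\geq s-2$. The first follows from the induction hypothesis applied to $\cap_{i=1}^{s-1}Q_i$, after noting (as in Proposition \ref{pro1}) that this ideal again satisfies the hypothesis, so $\sdepth(S/\cap_{i=1}^{s-1}Q_i)\geq \depth(S/\cap_{i=1}^{s-1}Q_i)\geq s-1$ by the localization/polynomial-extension formula $\sdepth(M\tensor K[x])=\sdepth(M)+1$ to restore the variables not appearing. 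The second follows likewise since $\cap_{i=1}^{s-1}(Q_i+Q_s)$ also inherits the hypothesis and has depth $s-2$. Feeding these into the $\sdepth(A)\geq\min\{\sdepth(B),\sdepth(C)+1\}$ inequality with $A=S/I$ gives $\sdepth(S/I)\geq\min\{s-1,(s-2)+1\}=s-1=\depth(S/I)$, which is the claim.

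The main obstacle is verifying cleanly that the auxiliary ideals $\cap_{i=1}^{s-1}Q_i$ and $\cap_{i=1}^{s-1}(Q_i+Q_s)$ really do satisfy the incomparability hypothesis $P_i\not\subset\sum_{j\neq i}P_j$ on their own associated primes — the radicals $\sqrt{Q_i+Q_s}$ may coincide for several $i$, so the primary decomposition of $\cap_{i=1}^{s-1}(Q_i+Q_s)$ must be made irredundant first, and one has to check the hypothesis survives this pruning; this is exactly the point handled implicitly in Proposition \ref{pro1} and I would handle it the same way. A secondary point is the bookkeeping with variables: passing to $S/\cap_{i=1}^{s-1}Q_i$ the prime $\sum_{i=1}^{s-1}P_i$ need not be $\mathfrak m$, so one applies the induction hypothesis inside the smaller polynomial ring on the variables actually occurring and then uses $\sdepth(M\otimes_K K[x_j])=\sdepth(M)+1$ to account for the missing variables — this matches the step $\depth(S/\cap_{i=1}^{s-1}Q_i)=s-2+\dim(S/\sum_{i=1}^{s-1}Q_i)$ in Proposition \ref{pro1}. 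Once these two compatibility checks are in place, the proof is a direct transcription of Proposition \ref{pro1} with $\sdepth$ in place of $\depth$ and Rauf's inequality in place of the Depth Lemma.
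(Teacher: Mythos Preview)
Your plan has a genuine gap: the inequality
\[
\sdepth(A)\geq \min\{\sdepth(B),\sdepth(C)+1\}
\]
for a short exact sequence $0\to A\to B\to C\to 0$ of $\ZZ^n$-graded modules is \emph{not} known. The only ``Stanley Depth Lemma'' available is the bound on the \emph{middle} term, $\sdepth(B)\geq\min\{\sdepth(A),\sdepth(C)\}$; this is what Rauf proves and what the paper actually uses. A bound of Depth-Lemma type on the kernel would in particular force $\sdepth(I)\geq\sdepth(S/I)+1$ for every monomial ideal (take $0\to I\to S\to S/I\to 0$), which is an open problem, not a tool one can invoke. So the step ``feeding these into $\sdepth(A)\geq\min\{\sdepth(B),\sdepth(C)+1\}$'' is exactly where the argument breaks.

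This is precisely why the paper does \emph{not} mimic Proposition~\ref{pro1}. Instead it places $S/I$ in the \emph{middle} of the sequence
\[
0\longrightarrow S/(I:x_n)\longrightarrow S/I\longrightarrow S/(I,x_n)\longrightarrow 0,
\]
so that Rauf's inequality applies directly, and runs a more delicate triple induction on $s$, on $n$, and on the maximal $x_n$-degree $e_I$ appearing in $G(I)$. The variable $x_n$ is chosen to lie in at least two of the $P_i$ (possible whenever the $G(P_i)$ are not pairwise disjoint; the disjoint case is handled separately via \cite{MI1}), which guarantees that both $(I:x_n)$ and $(I,x_n)$ again satisfy the hypothesis, with smaller $e_I$, smaller $n$, or smaller $s$. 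Your bookkeeping remarks about extra variables and about checking that the auxiliary ideals inherit the hypothesis are fine, but they do not rescue the argument: without the kernel-side Stanley depth inequality, the Mayer--Vietoris sequence of Lemma~\ref{Lemma1} gives you no control over $\sdepth(S/I)$.
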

\begin{proof}
 Using \cite[Lemma 3.6]{HVZ} it is enough to consider the case $\sum_{i=1}^sP_i=\mathfrak{m}$. By Proposition \ref{pro1} we have $\depth(S/I)=s-1$. We show that $\sdepth(S/I)$ $\geq s-1$.  Apply induction on $s$,  case $s=1$ being clear. Fix $s>1$ and apply induction on $n$. If $n\leq 5$ then the result follows by \cite{P}. Let ${A}:=\cup_{i=1}^s(G(P_i)\setminus \sum_{1=j\neq i}^sG(P_j))$. If $(A)=\mathfrak{m}$ then  note that $G(P_i)\cap G(P_j)=\emptyset$ for all $i\not= j$.  By \cite[Theorem 2.1]{MI1} we have $\sdepth(S/I)\geq s-1$. Now suppose that $(A)\not =\mathfrak{m}$. By renumbering the primes and variables we can assume that $x_n \not \in A$. There exists a number $r$, $2\leq r\leq s$ such that $x_n\in G(P_j)$, $1\leq j\leq r$ and $x_n\notin G(P_j)$, $r+1\leq j\leq s$.
 Let $S':=K[x_1,\ldots,x_{n-1}]$. First assume that $r<s$. Let $Q'_j=Q_j\cap S'$, $P'_j=P_j\cap S'$ and $J=\bigcap_{i=r+1}^s Q'_i\subset S'$, $L=\bigcap_{i=1}^r Q'_i\subset S'$. We have  $(I,x_n)=((J\cap L),x_n)$
because  $(Q_j,x_n)=(Q'_j,x_n)$ using the structure of monomial primary ideals given in \cite{RV}. In the exact sequence $$0\longrightarrow{S}/{(I:x_n)}\longrightarrow {S}/{I}\longrightarrow {S}/{(I,x_n)}\longrightarrow 0,$$
the sdepth of the right end is $\geq s-1$ by induction hypothesis on $n$ for  $J\cap L \subset S'$ (note that we have    $P'_i\not\subset \sum_{1=i\neq j}^{s-1}P'_j$ for all $i\in [s]$ since $x_n\not\in A$).
 Let $e_I$ be the maximum degree in $x_n$ of a monomial from $G(I)$. Apply induction on $e_I$. If $e_I=1$ then $(I:x_n)=JS$ and
the sdepth of the left end in the above exact sequence  is equal with $\sdepth(S/JS)\geq (s-r-1)+r=s-1$ since there are at least $r$ variables which do not divide the minimal monomial generators of ideal $(I:x_n)$ and we may apply induction hypothesis on $s$ for $J$. By \cite[Theorem 3.1]{R1} we have
$\sdepth(S/I)\geq \min\{\sdepth(S/(I:x_n)), \sdepth (S/(I,x_n))\}\geq s-1$. If $e_I>1$ then note that $e_ {(I:x_n)}<e_I$ and by induction hypothesis on $e_I$ or $s$ we get $\sdepth(S/(I:x_n))\geq s-1$. As above we obtain by \cite[Theorem 3.1]{R1} $\sdepth(S/I)\geq s-1$.

 Now let $r=s$. If $e_I=1$ then $I=(L,x_n)$ and  by induction on $n$ we have $\sdepth(S/I)=\sdepth(S'/L)\geq s-1$. If $e_I>1$ then
 by induction hypothesis on $e_I$ and $s$ we get $\sdepth(S/(I:x_n))\geq s-1$. As above we are done using \cite[Theorem 3.1]{R1}.
\end{proof}

Let $I$ be a squarefree monomial ideal with minimal monomial generating set $G(I)=\{u_1,\dots,u_m\}$. Let $u$ be a monomial of $S$ then $\supp(u):=\{i:x_i \text{ divides } u\}$. Then we call a monomial ideal $J$ a {\em modification} of $I$ (see \cite{SN}), if $G(J)=\{v_1,\dots,v_m\}$ and $\supp(v_i)=\supp(u_i)$ for all $i$. Obviously, $\sqrt{J}=I$. Let $\alpha=(a_1,\ldots,a_n) \in \NN^n$, $a_i\neq 0$ for all $i$ and $\sigma_{\alpha}$ be the flat $K$-morphism of $S$ given by $x_i\rightarrow x_i^{a_i}$, $i\in [n]$. Let $I^{\alpha}:=\sigma_{\alpha}(I)S$. Then $I^{\alpha}$ is called a {\em trivial modification} of $I$.
It is well known that $\sdepth(I^{\alpha})\leq \sdepth(I)$ by \cite[Corollary 2.2]{MI}. In our next theorem we will show that the equality holds in this case.
\begin{Example}{\em
Let $I=(x_1x_2x_3,x_2x_4,x_4x_5x_6,x_2x_6,x_5x_7,x_1x_2x_6x_7)\subset K[x_1,\dots,x_7]$ and $\alpha=(2,3,6,3,7,8,2)$, then we have $$I^\alpha=(x_1^2x_2^3x_3^6,x_2^3x_4^3,x_4^3x_5^7x_6^8,x_2^3x_6^8,x_5^7x_7^2,x_1^2x_2^3x_6^8x_7^2).$$
}
\end{Example}
\begin{Lemma}[\cite{C},\cite{IQ1}]\label{LIQ}
Let $r,m$ and $a$ be positive integers with $r<m$ and $v_1,\dots,v_m \in K[x_2,\dots,x_n]$ be some monomials of $S$. Let $I=(x_1^{a}v_1,\dots,x_1^{a}v_r,v_{r+1},\dots,v_{m})$ and $I'=(x_1^{a+1}v_1,\dots, x_1^{a+1}v_r,v_{r+1},\dots,v_m)$ be monomial ideals of $S$. Then $$\sdepth(I)=\sdepth(I').$$
\end{Lemma}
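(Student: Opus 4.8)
Put $S'=K[x_2,\dots,x_n]$ and regard $W=(v_1,\dots,v_r)$ and $V=(v_{r+1},\dots,v_m)$ as monomial ideals of $S$ generated by monomials that do not involve $x_1$, so that $I=x_1^aW+V$ and $I'=x_1^{a+1}W+V$. If $v_i=1$ for some $i>r$ then $I=I'=S$ and there is nothing to prove; otherwise set $g_0:=\lcm(v_1,\dots,v_m)\neq 1$, so that $\lcm(G(I))=x_1^ag_0$ and $\lcm(G(I'))=x_1^{a+1}g_0$ have the common support $\{1\}\cup\supp(g_0)$, hence $n-|\supp(\lcm(G(I)))|=n-|\supp(\lcm(G(I')))|=:t$. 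One records the identities $I'=x_1I+V$ and $I':x_1=I$, which say informally that $I'$ is $I$ stretched by one step in the $x_1$-direction; the plan is to show that this stretching does not change the Stanley depth.

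Both numbers will be computed through the combinatorial description of \cite{HVZ}. For a monomial ideal $L$ with $g=\lcm(G(L))$, let $P_L=\{w\text{ monomial}:w\in L,\ w\mid g\}$, ordered by divisibility; then $\sdepth(L)=\bigl(n-|\supp(g)|\bigr)+\max_{\mathcal P}\,\min_{[c,d]\in\mathcal P}|\supp(d)|$, the maximum being over all partitions $\mathcal P$ of $P_L$ into intervals. Writing a monomial that divides a power of $x_1$ times $g_0$ as $x_1^j\mu$ with $\mu\mid g_0$, a direct check gives
\[
P_I=\{x_1^j\mu:\mu\mid g_0,\ 0\le j\le a-1,\ \mu\in V\}\cup\{x_1^a\mu:\mu\mid g_0,\ \mu\in W+V\},
\]
\[
P_{I'}=\{x_1^j\mu:\mu\mid g_0,\ 0\le j\le a,\ \mu\in V\}\cup\{x_1^{a+1}\mu:\mu\mid g_0,\ \mu\in W+V\},
\]
where ``$\mu\in V$'' means that $\mu$ is a monomial belonging to the ideal $V$, and likewise for $W+V$. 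Since the offsets $t$ agree, it suffices to prove that $\max_{\mathcal P}\min_{[c,d]\in\mathcal P}|\supp(d)|$ takes the same value for the poset $P_I$ and for the poset $P_{I'}$.

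Now $P_{I'}$ is precisely $P_I$ with one extra copy of the layer $\{x_1^0\mu:\mu\mid g_0,\ \mu\in V\}$ spliced into the $x_1$-direction just beneath the top layer. Accordingly there is an order-preserving surjection $\phi\colon P_{I'}\to P_I$, $x_1^j\mu\mapsto x_1^{\min(j,a)}\mu$, whose fibres have at most two elements, the two-element fibres occurring exactly over $\{x_1^a\mu:\mu\mid g_0,\ \mu\in V\}$. Because $a\ge 1$, the $x_1$-exponent of $d$ is positive for every interval whose top lies in layer $a$ or above, so moving such a top from layer $a$ to layer $a+1$ leaves $|\supp(d)|$ unchanged; thus $\phi$ preserves the support of the maximal element of each interval. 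I would therefore transport an optimal interval partition of $P_I$ to one of $P_{I'}$ by lengthening by one $x_1$-step exactly those intervals that reach the spliced layer, and transport an optimal partition of $P_{I'}$ back along $\phi$; in either direction every top-support is preserved, which yields the asserted equality. The step I expect to be delicate is making these transports legitimate: an interval may begin, end, or run through the spliced layer, and over the two-element fibres of $\phi$ one must split or amalgamate intervals so that the outcome is again a disjoint cover by intervals of the target poset with no maximal element's support dropping below the running minimum. Carrying this out carefully---ideally by producing a value-preserving bijection between the families of interval partitions of the two posets---is the heart of the matter; everything else is the routine verification of the identities and of the two poset descriptions. (One may instead induct on $a$ via the exact sequences attached to the variable $x_1$ together with the results of \cite{R1}, but the same off-by-one possibility has to be excluded, so the essential difficulty is unchanged; note also that when $a=1$ the inequality $\sdepth(I')\le\sdepth(I)$ is already immediate from \cite[Corollary 2.2]{MI}, since then $I'=I^{(2,1,\dots,1)}$ is a trivial modification of $I$.)
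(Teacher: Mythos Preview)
The paper does not prove this lemma at all; it is quoted verbatim from \cite{C} and \cite{IQ1}, so there is no ``paper's own proof'' to compare against. Your plan---compute both Stanley depths through the Herzog--Vl\u adoiu--Zheng characteristic poset and show that inserting one extra $x_1$-layer does not change the optimum over interval partitions---is exactly the strategy used in those references, and your descriptions of $P_I$ and $P_{I'}$ are correct.

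There is, however, a genuine slip in your dimension formula. In the HVZ description the Stanley space attached to an interval $[c,d]\subset P_L^g$ has dimension $|\{j: d_j=g_j\}|$, not $|\supp(d)|$; the two agree only when $g$ is squarefree, which is not the case here (the $x_1$-exponent of $g$ is $a\ge 1$, and $g_0=\lcm(v_1,\dots,v_m)$ need not be squarefree either). With the correct quantity, the $x_1$-coordinate contributes to the dimension precisely when the top of the interval sits at the highest $x_1$-level ($a$ for $P_I$, $a+1$ for $P_{I'}$), and your heuristic ``stretch intervals that reach the top layer by one $x_1$-step'' is the right picture. But the transport you sketch is not yet a proof: an interval of $P_I$ lying entirely in the top layer (so $c_1=d_1=a$) with $\mu$-part contained in $V$ lifts to layer $a{+}1$ of $P_{I'}$ and leaves the corresponding layer-$a$ elements of $P_{I'}$ uncovered, so one must also produce compatible intervals for the new $V$-layer without lowering the minimum; symmetrically, collapsing along $\phi$ can merge two distinct intervals over a two-element fibre. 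This bookkeeping is precisely what \cite{C} and \cite{IQ1} carry out, so once you replace $|\supp(d)|$ by $|\{j:d_j=g_j\}|$ and handle these cases explicitly, your outline becomes the cited proof. (Your parenthetical about $a=1$ and \cite[Corollary~2.2]{MI} is fine but note that result is stated there for squarefree $I$, which is not assumed in the lemma.)
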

\begin{Theorem}\label{theorem2}
Let $\alpha \in \NN^n$, then $\sdepth(I^\alpha)=\sdepth(I)$.
\end{Theorem}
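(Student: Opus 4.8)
The plan is to pass from $I$ to $I^\alpha$ one unit of exponent at a time, using Lemma \ref{LIQ} at each step; this in fact produces the full equality directly, so that the inequality $\sdepth(I^\alpha)\le\sdepth(I)$ of \cite[Corollary 2.2]{MI} is not even needed. Write $\alpha=(a_1,\dots,a_n)$ with $a_i\ge 1$ for all $i$, and let $G(I)=\{u_1,\dots,u_m\}$ be the squarefree minimal monomial generating set of $I$; recall that $I^\alpha=(\sigma_\alpha(u_1),\dots,\sigma_\alpha(u_m))$. I would induct on $|\alpha|:=\sum_{i=1}^n a_i$. If $|\alpha|=n$ then $\alpha=(1,\dots,1)$, $\sigma_\alpha=\id$, and $I^\alpha=I$, so there is nothing to prove.

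For the inductive step, choose $i$ with $a_i\ge 2$ and let $\beta\in\NN^n$ agree with $\alpha$ except in the $i$-th coordinate, where $\beta_i=a_i-1$. Since $|\beta|=|\alpha|-1$, the induction hypothesis gives $\sdepth(I^\beta)=\sdepth(I)$, so it is enough to show $\sdepth(I^\alpha)=\sdepth(I^\beta)$. After renumbering the generators, assume $x_i\mid u_j$ for $1\le j\le r$ and $x_i\nmid u_j$ for $r<j\le m$. Since $\sigma_\alpha$ and $\sigma_\beta$ agree on every monomial not involving $x_i$, and since $I$ is squarefree (so for $j\le r$ we may write $u_j=x_i u_j'$ with $x_i\nmid u_j'$), setting $w_j:=\sigma_\alpha(u_j')$ for $j\le r$ and $w_j:=\sigma_\alpha(u_j)$ for $j>r$ yields monomials $w_j\in K[\{x_l:l\ne i\}]$ with
\[
I^\beta=(x_i^{a_i-1}w_1,\dots,x_i^{a_i-1}w_r,\,w_{r+1},\dots,w_m),\qquad
I^\alpha=(x_i^{a_i}w_1,\dots,x_i^{a_i}w_r,\,w_{r+1},\dots,w_m).
\]
If $0<r<m$, Lemma \ref{LIQ} (with $x_i$ in the role of $x_1$ and $a=a_i-1\ge 1$) gives $\sdepth(I^\beta)=\sdepth(I^\alpha)$. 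If $r=0$ then $I^\beta=I^\alpha$ outright. If $r=m$ then $I^\beta=x_i^{a_i-1}(w_1,\dots,w_m)$ and $I^\alpha=x_i^{a_i}(w_1,\dots,w_m)$ are each isomorphic, as $S$-modules and up to a degree shift, to $(w_1,\dots,w_m)$; since Stanley depth is invariant under shifts, $\sdepth(I^\beta)=\sdepth(I^\alpha)$. This closes the induction.

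I do not expect a genuine obstacle: the whole argument is a bookkeeping reduction of the statement to Lemma \ref{LIQ}. The only points that need attention are (i) observing that applying the rescaling $\sigma_\alpha$ to a squarefree monomial makes each variable $x_i$ occur either to the uniform exponent $a_i$ or not at all --- this is where squarefreeness of $I$ is used, and it is precisely what puts the two generating sets above in the form required by Lemma \ref{LIQ}; and (ii) noting that Lemma \ref{LIQ}, although stated for $x_1$, applies to any variable $x_i$ after a harmless relabelling, together with disposing of the trivial boundary cases $r=0$ and $r=m$.
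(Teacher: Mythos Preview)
Your proof is correct and follows essentially the same route as the paper: induct on $|\alpha|=\sum a_i$, peel off one unit of exponent in a coordinate with $a_i\ge 2$, and invoke Lemma~\ref{LIQ} to equate the Stanley depths. If anything, your write-up is slightly more careful than the paper's, since you explicitly dispose of the boundary cases $r=0$ and $r=m$ (Lemma~\ref{LIQ} is stated only for $0<r<m$), whereas the paper applies the lemma without isolating these cases.
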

\begin{proof}
Apply induction on $s=\sum_{i=1}^na_i$. If $s=n$ there exist nothing to show. Suppose that $s>n$. Then there exists $i$ such that $a_i>1$, let us say $a_1>1$. Renumbering the variables we may suppose $I^\alpha$ has the form $(x_1^{a_1}v_1,\ldots,x_r^{a_1}v_r,v_{r+1},\ldots,v_m)$ for some monomials $v_1,\ldots,v_m$ which are not multiple of $x_1$. By Lemma \ref{LIQ} we get $\sdepth(I^\alpha)=\sdepth(J)$ for $J=(x_1^{a_1-1}v_1,\ldots,x_1^{a_1-1}v_r,v_{r+1},\ldots,x_m)$. But $J=I^{\alpha'}$ for $\alpha'=(a_1-1,a_2,\ldots,a_n)$ which has $s'=s-1$. By induction hypothesis we have $\sdepth(I^{\alpha'})=\sdepth(I)$, which is enough.
\end{proof}
\begin{Corollary}\label{c}
Let $I\subset S$ be a squarefree monomial ideal if the Stanley conjecture holds for $I$, then the Stanley conjecture also holds for $I^\alpha$.
\end{Corollary}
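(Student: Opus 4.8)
The plan is to derive the corollary from Theorem \ref{theorem2} together with the fact that a trivial modification also leaves the ordinary depth unchanged. Granting the latter, the corollary is immediate: if $\sdepth(I)\geq\depth(I)$, then
\[
\sdepth(I^{\alpha})=\sdepth(I)\geq\depth(I)=\depth(I^{\alpha}),
\]
where the first equality is Theorem \ref{theorem2}, the last is the depth statement, and the middle inequality is the hypothesis that Stanley's conjecture holds for $I$.

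Thus the only point to establish is $\depth(I^{\alpha})=\depth(I)$, and I would proceed in three steps. First, for any nonzero monomial ideal $J\subset S$ the short exact sequence $0\to J\to S\to S/J\to 0$ and the Depth Lemma (see \cite[Lemma 1.3.9]{RV}), using $\depth S=n$, give $\depth J=\depth(S/J)+1$; applying this to $J=I$ and to $J=I^{\alpha}$ reduces the claim to $\depth(S/I^{\alpha})=\depth(S/I)$. Second, I would introduce the subring $T:=K[x_1^{a_1},\dots,x_n^{a_n}]\subset S$: the map $\sigma_{\alpha}$ is an isomorphism of $S$ onto $T$, $S$ is a finite free $T$-module with basis $\{x_1^{c_1}\cdots x_n^{c_n}: 0\leq c_i<a_i\}$, and $I^{\alpha}=\sigma_{\alpha}(I)S$, so that $S/I^{\alpha}=S\otimes_T(T/\sigma_{\alpha}(I))$. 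Third, I would note that $T\hookrightarrow S$ is a flat (indeed finite free) graded homomorphism which is local, i.e. $\mathfrak m_S\cap T=\mathfrak m_T=(x_1^{a_1},\dots,x_n^{a_n})$, and whose closed fibre $S/\mathfrak m_T S=K[x_1,\dots,x_n]/(x_1^{a_1},\dots,x_n^{a_n})$ is Artinian, hence of depth $0$. The base-change formula for depth along a flat local homomorphism (see, e.g., \cite{BH}) then yields
\[
\depth_S(S/I^{\alpha})=\depth_T(T/\sigma_{\alpha}(I))+\depth_S(S/\mathfrak m_T S)=\depth_T(T/\sigma_{\alpha}(I)),
\]
and since $\sigma_{\alpha}$ maps $S$ isomorphically onto $T$ and $I$ onto $\sigma_{\alpha}(I)$, the right-hand side equals $\depth_S(S/I)$. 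Combining, $\depth(S/I^{\alpha})=\depth(S/I)$, hence $\depth(I^{\alpha})=\depth(I)$.

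The one non-formal ingredient, and hence the main obstacle, is precisely this depth equality: one has to recognize $S/I^{\alpha}$ as a base change of $S/I$ along the finite free extension $T\subset S$ and then use the behaviour of depth under a flat local homomorphism, working either with the graded maximal ideals throughout or after localizing at $\mathfrak m$. Everything else is bookkeeping. An equivalent route avoiding the depth base-change formula: tensoring a minimal graded free $T$-resolution of $T/\sigma_{\alpha}(I)$ with the free module $S$ keeps it minimal, so $\projdim_S(S/I^{\alpha})=\projdim_S(S/I)$, and then the Auslander--Buchsbaum formula gives the same conclusion.
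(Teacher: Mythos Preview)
Your argument is correct, but it takes a different route from the paper. The paper's proof is a one-line chain
\[
\depth(I^{\alpha})\;\leq\;\depth(I)\;\leq\;\sdepth(I)\;=\;\sdepth(I^{\alpha}),
\]
where the first inequality is quoted from \cite[Theorem 2.3]{HTT} (using that $\sqrt{I^{\alpha}}=I$), the middle inequality is the hypothesis, and the last equality is Theorem \ref{theorem2}. In particular, the paper never needs the \emph{equality} $\depth(I^{\alpha})=\depth(I)$; the inequality $\depth(I^{\alpha})\leq\depth(I)$, which holds for any monomial ideal and its radical, already suffices.

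What you do instead is prove the stronger statement $\depth(I^{\alpha})=\depth(I)$ directly, by recognising $S$ as a finite free module over $T=K[x_1^{a_1},\dots,x_n^{a_n}]$ and applying the depth formula for flat local extensions (or, equivalently, pushing a minimal free resolution through the free extension and invoking Auslander--Buchsbaum). This is a perfectly valid and self-contained argument; it avoids the external citation to \cite{HTT} at the cost of a short commutative-algebra computation, and it yields the sharper equality. The paper's approach is quicker given the available literature, while yours is more informative about why the depth is preserved under trivial modification.
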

\begin{proof}
Since $\depth(I)\leq \sdepth(I)$, by \cite[Theorem 2.3]{HTT} and Theorem \ref{theorem2} we have $\depth(I^\alpha)\leq \depth(I)\leq \sdepth(I)=\sdepth(I^\alpha)$. This completes the proof.
\end{proof}

\end{document}